\newtheorem{theo}{Theorem}
\newtheorem{cor}[theo]{Corollary}
\newtheorem{example}[theo]{Example}
\newcommand{\CC}{{\mathbb{C}}}
\newcommand{\PP}{{\mathbb{P}}}
\newcommand{\RR}{{\mathbb{R}}}
\newcommand{\SSS}{{\mathbb{S}}}
\newcommand{\ZZ}{{\mathbb{Z}}}
\newcommand{\calO}{{\mathcal{O}}}
\begin{document}
\title[Complements and Milnor fibres]{On homotopy types of complements of analytic sets and Milnor fibres}
\author{Javier Fern\'andez de Bobadilla}
\address{ICMAT. CSIC-Complutense-Aut\'onoma-Carlos III}
\email{javier@mat.csic.es}
\thanks{Research partially supported by the ERC Starting Grant project TGASS and by Spanish Contract MTM2007-67908-C02-02. The author thanks to the Faculty de Ciencias Matem\'aticas of the Universidad Complutense de Madrid for excellent working conditions.}
\dedicatory{Dedicated to A. Libgober in the ocassion of his 60-th birthday.}
\date{10-7-2008}
\subjclass[2000]{Primary: 14B05, 14J17, 32S05, 32S25, 32S50, 55P62}
\begin{abstract}
We prove that for any germ of complex analytic set in $\CC^n$ there exists a hypersurface singularity whose Milnor fibration has trivial geometric monodromy and fibre
homotopic to the complement of the germ of complex analytic set. As an application we show an example of a quasi-homogeneous hypersurface singularity,
with trivial geometric monodromy and simply connected and non-formal Milnor fibre.
\end{abstract}
\maketitle
\section{Introduction}

The Milnor fibre of an isolated hypersurface singularity has the homotopy type of a bouquet of spheres of real dimension
equal to the complex dimension of the hypersurface. For non-isolated singularities there are very few general results
on the homotopy type of the Milnor fibre. Most notable and classical is Kato-Matsumoto bound which states that the 
Milnor fibre is $s-1$ connected if $S$ is the codimension of the singular set in the hypersurface.
For some special classes of singularities Siersma, Zaharia, T. De Jong, Nemethi and Shubladze\cite{Si},~\cite{Za},~\cite{dJ},~\cite{Ne},~\cite{Sh} have proved that 
the Milnor fibre is a bouquet of spheres of different dimensions. We show here that the class of homotopy types that
can be realised by the Milnor fibre of a hypersurface singularity is very rich: any homotopy type of a complement 
of germ of analytic space is in this class (Theorem~\ref{complementos}).
As a consequence complements of global quasi-homogeneous algebraic sets in 
$\CC$ are in this class, and the product of the circle and the complement of any projective algebraic set containing a
hyperplane are also in this class (Corollaries~\ref{quasihomogeneous} and~\ref{projective}).

Since the foundational work of Deligne, Griffiths, Morgan and Sullivan\cite{Su},~\cite{DGMS},~\cite{M}, 
and later Navarro-Aznar~\cite{NA},
it has been realised that the rational homotopy
type of compact K\"ahler manifolds and algebraic varieties is quite special and can be described in terms of the 
cohomology ring of the variety, the cohomology rings of certain subspaces involved in its compactification or
resolution of singularities, and natural mappings among them. When the rational homotopy type is determined only 
by the cohomology ring of the variety we say that the manifold is formal. Although a quasi-projective variety is not
formal in general, many classes of varieties, like compact K\"ahler manifolds~\cite{DGMS} or complements of hyperplane arrangements~\cite{Br}, 
are formal. In the last years understanding formality in the context of 
algebraic geometry and singularity theory has been subject of intense research (\cite{CM},\cite{DS},\cite{DPS1},\cite{DPS2},\cite{DPS3},\cite{PS1},\cite{PS2},\cite{Zu}).

The local Milnor fibre of a hypersurface singularity has very restrictive algebro-topological properties in common with algebraic varieties (e.g. polarised Mixed-Hodge structure in its cohomology). 
It is therefore natural to study its rational homotopy type in a similar fashion than in the case of algebraic
varieties. Very recently H. Zuber~\cite{Zu} has given the first
example of non-formal Milnor fibre of a quasihomogeneous polynomial , answering Question~5.5~of~\cite{PS1} (the Milnor fibre of a quasihomogeneous polynomial). 
Such Milnor fibre is non simply connected. It is clear from the literature than formality has different flavours in the simply connected and 
non-simply connected case. As an application of our main Theorem and results of Denham and Suciu on moment-angle complexes 
we find, up to our knowledge, the first family of quasihomogeneous polynomials with Milnor fibres non-formal and simply connected. Moment-angle manifolds and intersections of quadrics are also studied by S. lopez de Medrano and S. Gitler 
in~\cite{LdM}~and~\cite{GLdM}. The study  of this kind of manifolds partly is motivated by the theory of dynamical systems.

I would like to thank A. Suciu for a conversation in the 60th birthday
of A. Libgober celebration conference in Jaca in which tha application given here to formality arose, and also to correct a mistake in the computation of the homology of
the Milnor fibre in the last example of the paper. I thank also S. Lopez de Medrano for an interesting conversation in Oaxaca'a meeting of the Spanish and Mexican Mathematical Societies. I dedicate the paper to A. Libgober.

\section{Main Theorem}

Let $I=(f_1,\ldots,f_r)$ be any ideal in $\calO_{\CC^n,O}$. Let $(Z,O)$ the germ of complex analytic set defined by it. By the conical structure of analytic sets, the
topological type of the complement $C(I):=B_{\epsilon_1}\setminus Z$ is independent of $\epsilon$, if $\epsilon>0$ is small enough (the sphere of radius $\epsilon$ centered
at the origin is denoted by $B_{\epsilon}$).

\begin{theo}
\label{complementos}
Let $I=(f_1,\ldots,f_r)$ be any ideal in $\calO_{\CC^n,O}$. The Milnor fibre of the function germ
\[F_I:(\CC^n+r,O)\to\CC\]
defined by 
\[f(x_1,\cdots,x_n,y_1,\cdots,y_r):=\sum_{i=1}^ry_if(x_1,\cdots,x_r)\]
has the homotopy type of $C(I)$. Its geometric monodromy is trivial.
\end{theo}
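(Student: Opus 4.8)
The plan is to exploit the fact that $F_I$ is homogeneous of degree one in the variables $y=(y_1,\dots,y_r)$: writing $f(x)=(f_1(x),\dots,f_r(x))$ one has $F_I(x,y)=\langle f(x),y\rangle$ (the bilinear pairing) and $F_I(x,\lambda y)=\lambda F_I(x,y)$ for every $\lambda\in\CC$. This one feature will drive both assertions, and each part is then essentially formal except for a single analytic input.

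For the triviality of the geometric monodromy I would use the $\CC^*$-action $\lambda\cdot(x,y):=(x,\lambda y)$. It satisfies $F_I(\lambda\cdot(x,y))=\lambda F_I(x,y)$, and for $|\lambda|=1$ it is a unitary transformation of $\CC^{n+r}$; hence the flow $\theta\mapsto(x,e^{i\theta}y)$ preserves every sphere $S^{(n+r)}_\rho$ about the origin, every tube $\{|F_I|=\delta\}$, and therefore $K_\rho:=F_I^{-1}(0)\cap S^{(n+r)}_\rho$, so it acts on $S^{(n+r)}_\rho\setminus K_\rho$. Its infinitesimal generator $v(x,y)=(0,iy)$ is nowhere zero on $S^{(n+r)}_\rho\setminus K_\rho$ (there $F_I\neq 0$, hence $y\neq 0$), is tangent to $S^{(n+r)}_\rho$, and satisfies $dF_I(v)=iF_I$; consequently under the Milnor fibration $F_I/|F_I|\colon S^{(n+r)}_\rho\setminus K_\rho\to S^1$ it projects to the rotation vector field. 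So $v$ is an admissible lift for computing the geometric monodromy, and its time-$2\pi$ flow is $(x,y)\mapsto(x,e^{2\pi i}y)=\Id$; hence the geometric monodromy is the identity. (The same works for the compact, tube version of the fibration.)

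For the homotopy type I would pass to a fibred model. Fix $0<\delta\ll\rho\ll\epsilon$ with $\rho$ a Milnor radius, chosen also so that the sphere $S^{(n)}_\rho\subset\CC^n$ is transverse to the strata of $Z$, and take $M:=F_I^{-1}(\delta)\cap\overline{B}^{(n+r)}_\rho$ as Milnor fibre. Let $\pi\colon\CC^{n+r}\to\CC^n$ be the projection. If $x\in Z$ then $F_I(x,\cdot)\equiv 0$, so $\pi(M)$ avoids $Z$; and for $x\notin Z$ the fibre $\pi^{-1}(x)\cap M$ is the intersection of the affine hyperplane $\{y:\langle f(x),y\rangle=\delta\}$ with the ball of radius $\sqrt{\rho^2-\|x\|^2}$ in $\CC^r$ — a compact convex set, nonempty exactly when the distance $\delta/\|f(x)\|$ of the origin to that hyperplane is at most $\sqrt{\rho^2-\|x\|^2}$. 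Thus $\pi(M)=U:=\{x:\|x\|<\rho,\ \|f(x)\|^2(\rho^2-\|x\|^2)\ge\delta^2\}$. Over $U$ there is the canonical section $s(x):=\delta\,\overline{f(x)}/\|f(x)\|^2\in\pi^{-1}(x)\cap M$, and I would use the fibrewise straight-line homotopy $H_t(x,y):=(x,(1-t)y+t\,s(x))$: it stays in $F_I^{-1}(\delta)$ because $y\mapsto F_I(x,y)$ is affine and equals $\delta$ at $y$ and at $s(x)$, and in $\overline{B}^{(n+r)}_\rho$ because the $y$-ball is convex, so it is a deformation retraction of $M$ onto $s(U)\cong U$, giving $M\simeq U$. (Working instead with a polydisc $\overline{B}^{(n)}_\rho\times\overline{B}^{(r)}_{\rho'}$ — a privileged neighbourhood for $F_I$ thanks to its $y$-linearity and transversality for generic radii — one gets the cleaner model $U=\{x\in\overline{B}^{(n)}_\rho:\|f(x)\|\ge\delta/\rho'\}$, a genuine Milnor tube complement.)

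The remaining point, and the one I expect to be the real obstacle, is that the inclusion $U\hookrightarrow\{\|x\|<\rho\}\setminus Z$ is a homotopy equivalence for $\delta$ small; granting this, $\{\|x\|<\rho\}\setminus Z\simeq C(I)$ by the conical structure finishes the proof. Writing $g(x):=\|f(x)\|^2(\rho^2-\|x\|^2)$, so that $U=\{g\ge\delta^2\}$ and $\{\|x\|<\rho\}\setminus Z=\{g>0\}$ is exhausted by the closed sets $\{g\ge c\}$ with cofibration inclusions (for regular values $c$), it suffices that $g$ has no critical value in some interval $(0,c_0]$. This is a \L{}ojasiewicz-type assertion: $g$ is real-analytic with $g^{-1}(0)=(Z\cap\overline{B}^{(n)}_\rho)\cup S^{(n)}_\rho$, and the \L{}ojasiewicz gradient inequality $\|\nabla g\|\ge C|g|^{\vartheta}$ ($\vartheta<1$) holds near $g^{-1}(0)$ — near $Z$ because it holds for the factor $\|f\|^2$ (classical), near $S^{(n)}_\rho$ because $\rho^2-\|x\|^2$ has nonvanishing gradient there, and at the corner $Z\cap S^{(n)}_\rho$ because $S^{(n)}_\rho$ is transverse to $Z$ — while the critical points of $g$ off $g^{-1}(0)$ have critical values bounded away from $0$; so $g$ has no small positive critical value, and its gradient flow retracts $\{g>0\}$ onto $\{g\ge c\}$ for small $c>0$. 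In short: excising the Milnor tube around $Z$ without changing the homotopy type of the complement is where the work is; once $F_I$ is seen to be $y$-homogeneous of degree one, the fibred model and the triviality of the monodromy come for free. This yields $M\simeq C(I)$ with trivial geometric monodromy.
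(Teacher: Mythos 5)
Your proposal is correct, and its skeleton is the same as the paper's: both exploit the linearity of $F_I$ in $y$ by projecting the Milnor fibre to the $x$-space, identifying the image with the complement, in a small neighbourhood of the origin, of a tube around $Z$, using fibrewise convexity to collapse the $y$-directions, and rotating the $y$-coordinates to trivialise the monodromy (your vector-field formulation of the latter is just the paper's isotopy $\varphi_\theta(x,y)=(x,e^{2\pi i\theta}y)$ rewritten in the sphere-fibration language). Where you genuinely diverge is in how the two nontrivial steps are discharged. First, instead of asserting, as the paper does, that $\pi\colon F_I\to C(I,\xi)$ is a homotopy fibration with contractible (convex) fibres, you exhibit the explicit section $s(x)=\delta\,\overline{f(x)}/\Vert f(x)\Vert^2$ and the fibrewise straight-line homotopy onto it; since the fibres are convex and the equation is affine in $y$, this is a strong deformation retraction, so you actually prove the ``easy to see'' step rather than invoking any fibration criterion. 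Second, for the passage from the tube complement to $C(I)$ the paper appeals to $Z(I,\xi)$ being an algebraic neighbourhood of $Z$ (Durfee-style), whereas you run the underlying argument directly: the rug function $g=\Vert f\Vert^2(\rho^2-\Vert x\Vert^2)$ is real analytic on a compact set, hence has no critical values in some interval $(0,c_0]$, and its gradient flow retracts $\{g>0\}=B_\rho\setminus Z$ onto $\{g\ge\delta^2\}=U$; this is self-contained and also covers the point the paper leaves implicit, namely that it is the complement of the tube, not merely the tube itself, whose homotopy type must be controlled. Two small remarks: the \L{}ojasiewicz gradient inequality holds near every point of the zero set of any real-analytic function, so the transversality of $S_\rho$ to the strata of $Z$ and the case analysis at the corner are unnecessary --- finiteness of the critical values of $g$ on $\overline{B}_\rho$ (Sard plus subanalyticity of the set of critical values, or the gradient inequality plus compactness) already suffices; and the claim that critical points off $g^{-1}(0)$ have values bounded away from $0$ is exactly that compactness argument and should be stated as such. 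Net effect: the same route as the paper, with the two steps the paper treats tersely worked out in a more elementary and explicit way.
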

\begin{proof}
The key is the linearity of $f_I$ in the $y_i$'s. The geometric monodromy statement is obvious: the transformation
$\varphi_{\theta}(x,y):=(x,e^{2\pi i\theta}y)$ takes the fibre over $\delta$ to the fibre over $e^{2\pi i\theta}\delta$ and is the identity for $\theta=1$.

It is well known that fhe homotopy type of the Milnor fibre is the same taking spheres or polydiscs as neighbourhoods of the origin.
The system of neighbourhoods of the origin that we will use is
\[N_{\epsilon,\epsilon}:=\{(x,y):||x||\leq\epsilon, ||y_i||\leq\epsilon, 1\leq i\leq r\}.\]

Take $\epsilon_1$, $\epsilon_2$ and $\delta$ small enough so that $F_I:=f_I^{-1}(\delta)\cap N_{\epsilon_1,\epsilon_2}$
is the Milnor fibre of $f_I$. Consider the projection
\[\pi:N_{\epsilon,\epsilon}\to B_{\epsilon}\]
defined by $\pi(x,y):=x$. Define $\xi:=\delta/\epsilon$. I claim that the image $\pi(F_I)$ is equal to the set
\[C(I,\xi):=B_\epsilon\setminus Z(I,\xi),\]
where
\[Z(I,\xi):=\{x\in B_\epsilon: \sum_{i=1}^r||f_i(x)||<\xi\}.\]

Indeed, given $x\in Z(I,\xi)$ and $y_1,\ldots,y_r$ such that $||y_i||\leq\epsilon$ for any $i$ we have
\[||f_I(x,y)||\leq \sum_{i=1}^r||y_i||||f_i(x)||\leq\epsilon\sum_{i=1}^r||f_i(x)||\leq\epsilon\xi<\delta.\]
Therefore $f_I(x,y)\neq\delta$. If $x$ belongs to $C(I,\xi)$ then
$\sum_{i=1}^r||f_i(x)||\geq\xi$. Hence we have the inequality
\[\eta:=\delta/\sum_{i=1}^r||f_i(x)||\leq\epsilon.\]
For any $1\leq j\leq r$ choose an argument $\theta_j$ such that
$e^{2\pi i\theta_j}f_j(x)$ is real and non-negative. Then 
$(x,\eta e^{2\pi i\theta_1},\ldots,\eta e^{2\pi i\theta_r})$ belongs to $F_I$. This proves the claim.

For any $x\in C(I,\xi)$ the fibre $\pi^{-1}(x)\cap F_I$ is the intersection of the polydisc defined by the conditions
$||y_i||\leq \epsilon$ with the linear subspace defined by $F_I(x,y)=\delta$
(the equation is linear in the $y$-coordinates). Such intersection is always convex, and hence contractible.
From this point it is easy to see that the restriction 
\[\pi:F_I\to C(I,\xi)\]
is a homotopy-fibration with contractible fibre, and therefore a homotopy equivalence.

If $\xi$ approaches $0$ as $\delta$ does. Thus, for $\delta$ small enough $Z(I,\xi)$ is an algebraic neighbourhood of 
$Z$, and admits it as a deformation retract. Therefore $C(I,\xi)$ is homotopy equivalent to $C(I)$.
\end{proof}

\begin{cor}
\label{quasihomogeneous}
If $I$ is a quasi-homogeneous ideal $f_I$ is also quasihomogeneous, and the global Milnor fibre is homotopic to the 
complement of the whole $\CC^n$ minus the algebraic set $V(I)$.
\end{cor}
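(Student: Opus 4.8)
The plan is to reduce the global statement to the local one (Theorem~\ref{complementos}) by exploiting the $\CC^*$-action supplied by the quasi-homogeneity. First I would make precise the weights: if $I=(f_1,\dots,f_r)$ is quasi-homogeneous with respect to weights $w_1,\dots,w_n$ on $x_1,\dots,x_n$, say $f_j$ has weighted degree $d_j$, then I assign to the new variable $y_j$ the weight $D-d_j$ for a common auxiliary degree $D$ (e.g. $D=\max_j d_j$, or any $D$ making all weights positive), so that each monomial $y_jf_j$ in $f_I$ has weighted degree $D$. This shows $f_I$ is quasi-homogeneous of degree $D$ on $\CC^{n+r}$, which is the first assertion.

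Next I would recall the standard fact (Milnor, \cite{Si}-style) that for a quasi-homogeneous polynomial $g$ on $\CC^N$ the local Milnor fibre at the origin is diffeomorphic to the global affine Milnor fibre $g^{-1}(1)\subset\CC^N$: the $\CC^*$-action $t\cdot z=(t^{e_i}z_i)_i$ lets one rescale any fibre $g^{-1}(\delta)$ to $g^{-1}(1)$, and push any bounded region out to all of $\CC^N$, so the Milnor ball can be inflated to the whole space without changing diffeomorphism type. Applying this to $g=f_I$ gives that the global Milnor fibre $f_I^{-1}(1)\subset\CC^{n+r}$ is diffeomorphic to the local one, which by Theorem~\ref{complementos} is homotopy equivalent to $C(I)=B_\epsilon\setminus Z(I)$.

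Finally I would upgrade $C(I)$ to the global complement. Using the $\CC^*$-action on the $x$-variables alone (weights $w_1,\dots,w_n$), the algebraic set $V(I)\subset\CC^n$ is invariant, and one can retract $\CC^n\setminus V(I)$ onto any small "ball-shaped" neighbourhood complement $B_\epsilon\setminus(V(I)\cap B_\epsilon)=C(I)$ by flowing along the action — this is just the conical structure of quasi-homogeneous affine varieties. Hence $C(I)\simeq\CC^n\setminus V(I)$, and chaining the equivalences gives that the global Milnor fibre of $f_I$ is homotopy equivalent to $\CC^n\setminus V(I)$, as claimed.

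The main obstacle is bookkeeping with the weights: one must check that the chosen $D$ can be taken so that \emph{all} the $y_j$-weights $D-d_j$ are strictly positive (possible since there are finitely many $d_j$), so that the $\CC^*$-action on $\CC^{n+r}$ is a genuine contracting action and the "inflate the Milnor ball" argument applies verbatim; everything else is a routine application of the conical structure theorem for quasi-homogeneous singularities together with Theorem~\ref{complementos}.
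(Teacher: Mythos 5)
Your argument is correct and is exactly the (implicit) reasoning behind the corollary, which the paper states without proof: give $y_j$ the weight $D-d_j$ so that $f_I$ is weighted homogeneous, identify the local Milnor fibre with the global affine fibre $f_I^{-1}(1)$ via the $\CC^*$-action, apply Theorem~\ref{complementos}, and use the contracting action on the $x$-variables to identify $C(I)$ with $\CC^n\setminus V(I)$. Only a cosmetic remark: the parenthetical choice $D=\max_j d_j$ would give the top-degree generator's variable weight $0$, so one should take $D>\max_j d_j$ as you yourself note at the end when insisting that all $y_j$-weights be strictly positive.
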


\begin{cor}
\label{projective}
Let $Z\subset\PP^n$ be a closed algebraic set containg a hyperplane. Then $Z\times\SSS^1$ is homotopic to the Milnor
fibre of a homoheneous polynomial with trivial geometric monodromy.
\end{cor}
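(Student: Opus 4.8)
The plan is to reduce the statement, via Corollary~\ref{quasihomogeneous}, to a purely topological realisation problem and then to solve that problem. By Corollary~\ref{quasihomogeneous} the global Milnor fibre of $f_I$ for a homogeneous ideal $I\subseteq\calO_{\CC^m,O}$ is homotopy equivalent to $\CC^m\setminus V(I)$, while by Theorem~\ref{complementos} its geometric monodromy is trivial and $f_I$ is homogeneous. Hence it suffices to produce a homogeneous ideal $I$ in some $\CC^m$ with $\CC^m\setminus V(I)\simeq Z\times\SSS^1$. The basic mechanism I would use is the cone computation underlying Theorem~\ref{complementos}: if $V(I)=\widetilde{W}$ is the affine cone over a projective variety $W\subseteq\PP^{m-1}$ and $W$ contains the coordinate hyperplane $\{x_0=0\}$, then $\widetilde{W}$ contains the hyperplane $\{x_0=0\}\subseteq\CC^m$, the coordinate $x_0$ runs freely and nonzero over the complement, and one obtains $\CC^m\setminus\widetilde{W}\cong\CC^*\times(\PP^{m-1}\setminus W)\simeq\SSS^1\times(\PP^{m-1}\setminus W)$. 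Thus it is enough to find a projective $W$ containing a hyperplane with $\PP^{m-1}\setminus W\simeq Z$.

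The heart of the argument is therefore to realise the given projective variety $Z$ \emph{itself} as the complement $\PP^{m-1}\setminus W$ of a projective variety. This is exactly opposite to the naive attempt of coning directly over $Z$: that would produce $\CC^{n+1}\setminus\widetilde{Z}\simeq\SSS^1\times(\PP^n\setminus Z)$, i.e. the \emph{complement} of $Z$ times a circle, rather than $Z$ times a circle. To realise $Z$ as a complement I would start from the model retraction $\PP^{n}\setminus\{p\}\simeq\PP^{n-1}$ (projection from $p$), which already settles the case $Z=H$, and then, exploiting the decomposition $Z=H\cup W_0$ afforded by the hyperplane $H\subseteq Z$, build an incidence or join variety $W$ in a suitable $\PP^{m-1}$ whose complement deformation retracts onto $Z$, while arranging that $W$ contains a hyperplane so that the circle factor splits off as above.

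I expect this realisation step to be the main obstacle: a priori a compact projective variety need not be homotopy equivalent to the complement of any algebraic set, so the hyperplane hypothesis on $Z$ must be used in an essential way, both to construct $W$ and to guarantee that the resulting $\CC^*$-bundle over $\PP^{m-1}\setminus W$ is trivial (a nontrivial bundle would give an $\SSS^1$-bundle over $Z$ rather than the product $Z\times\SSS^1$). Granting such a $W\subseteq\PP^{m-1}$, the proof concludes cleanly: let $I$ be the homogeneous ideal of the cone $\widetilde{W}$, generated by homogeneous $g_1,\dots,g_r$, which may be taken of equal degree after a Veronese re-embedding so that $f_I=\sum_{i=1}^r y_ig_i$ is genuinely homogeneous. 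By Corollary~\ref{quasihomogeneous} the global Milnor fibre of $f_I$ is homotopy equivalent to $\CC^m\setminus\widetilde{W}\simeq\SSS^1\times(\PP^{m-1}\setminus W)\simeq Z\times\SSS^1$, and by Theorem~\ref{complementos} its geometric monodromy is trivial, as required.
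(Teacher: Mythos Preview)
The statement as printed has a typo: the conclusion should read $(\PP^n\setminus Z)\times\SSS^1$, not $Z\times\SSS^1$. This is what the paper's proof actually establishes. It takes the homogeneous ideal $I$ of the affine cone $\widetilde{Z}\subset\CC^{n+1}$ and notes that $\CC^{n+1}\setminus\widetilde{Z}$ is a $\CC^*$-bundle over $\PP^n\setminus Z$; since $Z$ contains a hyperplane $H$, the base lies in the contractible affine chart $\PP^n\setminus H\cong\CC^n$, over which the tautological $\CC^*$-bundle is trivial, so $\CC^{n+1}\setminus\widetilde{Z}\simeq(\PP^n\setminus Z)\times\SSS^1$. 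Corollary~\ref{quasihomogeneous} then identifies this complement with the Milnor fibre of the homogeneous polynomial $f_I$.

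You reproduced precisely this argument in what you call the ``naive attempt'', and then discarded it because it gives $(\PP^n\setminus Z)\times\SSS^1$ rather than the printed $Z\times\SSS^1$. Everything after that paragraph is devoted to proving a statement the paper does not claim. The realisation problem you pose---finding $W\subset\PP^{m-1}$ containing a hyperplane with $\PP^{m-1}\setminus W\simeq Z$---asks for a smooth affine variety with the homotopy type of a given closed projective variety; you yourself flag this as the main obstacle and leave it unresolved, and indeed there is no reason to expect it to be solvable in general (already for $Z$ equal to a hyperplane one would be asking for a smooth affine variety homotopy equivalent to $\PP^{n-1}$). In short, your ``naive attempt'' \emph{is} the paper's proof; the remainder chases a misprint.
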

\begin{proof}
The complement of the cone in $\CC^{n+1}$ associated to $Z$ fibre over $\PP^n\setminus Z$ 
with fibre $\CC^*$ (homotopic to $\SSS^1$). The fibration
is trivial since $Z$ contains a hyperplane, and the fibration of $\CC^{n+1}\setminus{O}$ over $\PP^n$ trivialises in
the contractible complement of any hyperplane.
\end{proof}

\section{An application}

\begin{theo}
\label{nonformality}
There are quasi-homogeneous polynomials with non-formal simply connected Milnor fibre and geometric monodormy
equal to the identity.
\end{theo}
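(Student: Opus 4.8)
The plan is to apply Theorem~\ref{complementos} to an ideal $I$ whose complement $C(I)$ is a simply connected, non-formal space realised as the complement of a germ of analytic set. By the theorem, the Milnor fibre of the associated $f_I$ is homotopy equivalent to $C(I)$, and if $I$ can be chosen quasi-homogeneous then by Corollary~\ref{quasihomogeneous} $f_I$ is quasi-homogeneous and its global Milnor fibre recovers the complement of $V(I)$ in all of $\CC^n$, with trivial geometric monodromy. So the whole problem reduces to producing a quasi-homogeneous (in fact one may as well try homogeneous, i.e. a projective) algebraic set $V\subset\CC^n$ whose complement is simply connected and non-formal.

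The natural source of such examples is the work of Denham and Suciu on complements of subspace arrangements. First I would recall that for a central subspace arrangement, the complement is simply connected as soon as all the subspaces have complex codimension at least two (general position of a generic flat / Lefschetz-type argument: the complement of a codimension $\geq 2$ subvariety in $\CC^n$ is simply connected). Then I would invoke a specific arrangement from Denham--Suciu (or a closely related construction) for which the complement is known to be \emph{not} formal — typically detected by a non-vanishing triple Massey product in $H^*(C(I);\QQ)$, or by a discrepancy between the lower central series ranks / holonomy Lie algebra and the cohomology ring. Passing from the (affine, conical) complement of the germ at $O$ to the global complement is exactly what Corollary~\ref{quasihomogeneous} provides, since a subspace arrangement is defined by homogeneous equations.

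Concretely, I would assemble the argument in the following steps. (1) Fix a central arrangement $\calA$ of linear subspaces of $\CC^n$, each of codimension $\geq 2$, chosen so that its complement $M(\calA)$ is non-formal; cite Denham--Suciu for the existence of such $\calA$ together with the Massey-product obstruction certifying non-formality. (2) Write $I$ for a homogeneous ideal with $V(I)=\bigcup\calA$; note $C(I)\simeq M(\calA)$ by the conical structure, and $M(\calA)$ is simply connected by the codimension hypothesis. (3) Apply Theorem~\ref{complementos} to get $f_I$ with Milnor fibre homotopy equivalent to $M(\calA)$ and trivial geometric monodromy; apply Corollary~\ref{quasihomogeneous} to upgrade $f_I$ to a quasi-homogeneous (homogeneous) polynomial whose global Milnor fibre is $\CC^n\setminus V(I)\simeq M(\calA)$. (4) Conclude: this Milnor fibre is simply connected, non-formal, and the monodromy is the identity. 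Since $f_I=\sum_i y_i f_i$ with the $f_i$ linear forms defining the flats, it is visibly a homogeneous polynomial of degree $2$ (quadratic), which is a pleasantly explicit outcome.

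The main obstacle is step~(1): one must exhibit a concrete subspace arrangement that is simultaneously (a) central, (b) of codimension $\geq 2$ in every subspace (for simple connectivity), and (c) provably non-formal. Formality of arrangement complements is subtle — hyperplane arrangements are always formal, so one genuinely needs higher-codimension flats — and the non-formality must be certified by an explicit obstruction (a nonzero Massey triple product or a failure of the holonomy/Koszulness criterion) rather than asserted. I would therefore lean directly on the Denham--Suciu examples, where this computation has already been carried out, and simply verify that the particular arrangement chosen has all flats of codimension at least two so that the simple-connectivity conclusion of the present theorem holds. Everything else is formal consequence of Theorem~\ref{complementos} and Corollary~\ref{quasihomogeneous}.
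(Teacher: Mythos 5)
Your argument is essentially the paper's own proof: reduce via Theorem~\ref{complementos} and Corollary~\ref{quasihomogeneous} to exhibiting a simply connected, non-formal complement of a (quasi-)homogeneous algebraic set, and take the Denham--Suciu subspace arrangements of complex codimension at least two, with non-formality certified by nontrivial Massey triple products and simple connectivity coming from the codimension, exactly as in~\cite{DS}. One side remark is inaccurate: the generators $f_i$ of the ideal of the \emph{union} of flats are not linear forms but products of linear forms (squarefree monomials in the coordinate-subspace case), so $f_I=\sum_i y_if_i$ is homogeneous of higher degree, not quadratic --- had the $f_i$ been linear the variety would be a hyperplane arrangement, whose complement is neither simply connected nor non-formal; this slip does not affect the proof, since only quasi-homogeneity of $I$ is used.
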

\begin{proof}
After Corollary~\ref{quasihomogeneous} it is enough to find a non-formal simply connected complement of algebraic set.
In~\cite{DS}, Section~9, it is given a family of arrangements of linear subspaces of complex codimension at least $2$, 
which are all intersections of coordinate hyperplanes, and such that their complement are non-formal 
(they have non-trivial Massey triple products). The codimension gives the simple connectivity property
\end{proof}

Here is the simplest of the above examples (see~\ref{quasihomogeneous}). Let $H_{ij}\subset\CC^6$ be the linear
subspace defined by $V(x_i,x_j)$. The complement
\[\CC^6\setminus (H_{12}\cup H_{23}\cup H_{34}\cup H_{45}\cup H_{56})\]
is simply connected and non-formal. The ideal of $H_{12}\cup H_{23}\cup H_{34}\cup H_{45}\cup H_{56}$ is 
\[ (x_1x_3x_4x_6,x_1x_3x_5,x_2x_3x_5,x_2x_4x_6,x_2x_4x_5). \]
Hence

\begin{example}
\label{ejemplo}
The Milnor fibre $F$ of the quasihomogeneous polynomial
\[f:\CC^{11}\to\CC\]
defined by
\[f(x_1,\cdots,x_6,y_1,\cdots,y_5):=y_1x_1x_3x_4x_6+y_2x_1x_3x_5+y_3x_2x_3x_5+y_4x_2x_4x_6+y_5x_2x_4x_5\]
is simply connected and non-formal. Its geometric monodromy is trivial. 

The homology of the Milnor fibre is computed from the homology of the complement of the subspace configuration. It can be done using Theorem A of~\cite{GM},~page~238, 
or using the formalism~\cite{DS}, concretely Hochster's~formula~(26). The result non-zerohomology groups are:
$H_0(F,\ZZ)=\ZZ$, $H_3(F,\ZZ)=\ZZ^5$, $H_4(F,\ZZ)=\ZZ^4$, $H_6(F,\ZZ)=\ZZ^3$, $H_7(F,\ZZ)=\ZZ^4$, $H_8(F,\ZZ)=\ZZ$.
\end{example}

\end{document}